\documentclass[12pt]{huber_article}

\usepackage{amsthm,amsfonts,amsmath,dsfont}

\usepackage{comment}

\newcommand{\prob}{\mathbb{P}}
\newcommand{\mean}{\mathbb{E}}
\newcommand{\var}{\mathbb{V}}
\newcommand{\mgf}{\operatorname{mgf}}
\newcommand{\ind}{\mathds{1}}
\newcommand{\unifdist}{\textsf{Unif}}

\newtheorem{lemma}{Lemma}

\usepackage{tikz}

\usepackage[utf8]{inputenc}
\begin{document}

\title{Halving the bounds for the Markov, \\
Chebyshev, and 
Chernoff Inequalities using smoothing}

\author{\noindent Mark Huber\\ Claremont McKenna College \\ {\tt mhuber@cmc.edu}}

\date{}

\maketitle

\begin{abstract}
  The Markov, Chebyshev, and Chernoff inequalities are
  some of the most widely used methods for bounding
  the tail probabilities of random variables.  In all three
  cases, the bounds are tight in the sense that there
  exists easy examples where the inequalities become
  equality.  Here we will show that through a simple
  smoothing using auxiliary randomness, that each of 
  the three bounds can be cut in half.  In many common
  cases, the halving can be achieved without the need
  for the auxiliary randomness.
\end{abstract}

\section{Introduction}

Markov's inequality, Chebyshev's inequality, and Chernoff's
inequality are three of the most widely used equalities
in applied probability.  Chernoff's 1952~\cite{chernoff1952}
paper alone has over 3500 citations, and the Markov 
and Chebyshev inequalities appear in virtually every 
undergraduate probability textbook.

\paragraph{Markov's inequality}
This inequality (see for instance~\cite{ross2006}) applies to all nonnegative random variables with finite mean.  It 
can be written as
\begin{equation}
\label{EQN:markov}
(\forall a \geq 0)(\prob(X \geq a) \leq \mean[X]/a).
\end{equation}

This inequality is tight.  Consider the simple random
variable that places all of its probability mass
at either $a$ or $0$. Then 
\[
\mean[X] = a\prob(X = a) = a \prob(X \geq a),
\]
so equality is obtained.

\paragraph{Chebyshev's inequality} The next inequality (see for instance~\cite{ross2006}) assumes both a finite first and second moment,
and so the variance $\var(X)$ is finite.  The bound is then 
\begin{equation}
\label{EQN:chebyshev}
(\forall a \geq 0)(\prob(|X - \mean[X]| \geq a) \leq 
 \var(X)/a^2).
\end{equation}

This bound is also tight.  Consider $X$ where 
$\prob(X = a) = \prob(X = -a) = p/2$, and
$\prob(X = 0) = 1 - p$.  Then $\mean[X] = 0$, 
$\var(X) = pa^2$, and 
\[
\prob(|X - \mean[X]| \geq a) = p = \var(X)/a^2.
\]

\paragraph{Chernoff's bound}  The Chernoff 
bound~\cite{chernoff1952} technically
applies to all random variables $X$, but is most effective
when there exist $t > 0$ and $t < 0$ such that the 
moment generating function $\mgf_X(t) = \mean[\exp(tX)]$ is finite.
It consists
of a bound on the right tail 
\begin{equation}
\label{EQN:upperchernoff}
(\forall a)(\forall t \geq 0)(\prob(X \geq a) \leq  \mgf_X(t)\exp(-ta)),
\end{equation}
and a bound on the left tail
\begin{equation}
\label{EQN:lowerchernoff}
(\forall a)(\forall t \leq 0)(\prob(X \leq a) \leq \mgf_X(t)\exp(-ta)),
\end{equation}

As with the Markov and Chebyshev inequalities, these upper and 
lower bounds are tight.  For the upper bound with $a \geq 0$, let
$\prob(X = a) = p_a$ and $\prob(X = 0) = 1 - p_a$.
Then
\[
\mgf_X(t)\exp(-ta) = [p_a e^{ta} + (1 - p_a)\exp(0)]\exp(-ta)
 = p_a + (1 - p_a)\exp(-ta).
\]
Since as $t \rightarrow \infty$, this gives an upper bound 
arbitrarily close to $p_a$, this is tight.  The lower tail bound
has a similar tight example.

\paragraph{Sample averages}

An important use of these tail inequalities is when the random
variable $X$ is the sample average of $n$ independent identically
distributed random variables $Y_1,\ldots,Y_n$.  That is,
$X = (Y_1 + \cdots + Y_n)/n$.  This is an important ingredient
in Monte Carlo simulation.  If the $Y_i$ have mean $\mu$,
then so does $X$, and the sample average can be 
used as an estimate of $\mu$.
Tail bounds then can be used to show how unlikely it is that
the estimate is far away from the mean.

Since Markov's inequality only depends upon the value of 
$\mean[X]$, increasing $n$ does not improve the bound.
However, if $Y_i$ has finite standard deviation $\sigma$,
then the standard deviation of $X$ is $\sigma/\sqrt{n}$.
Then Chebyshev's inequality can be used to say that 
$\prob(|X - \mu| \geq a) \leq \sigma^2/[n a^2],$ and
so the chance that $X$ is far away from its mean is inversely proportional to $n$.

To do better than polynomial convergence, methods such as the {\em median-of-means} approach to estimating 
$\mu$ (see~\cite{jerrumvv1986}) are used.
Suppose there exists a value of $t > 0$ where
$\mgf_{Y_i}(t)$ is finite, then 
\begin{align*}
 \prob(X \geq a) &= \prob(Y_1 + \cdots + Y_n \geq n a) \\
 &\leq \mgf_{Y_1 + \cdots + Y_n}(t) \exp(-t na).
\end{align*}
For independent random variables, the moment generating function
of the sum is the product of the moment generating function, so 
\[
\prob(X \geq a) \leq \mgf_{Y_1}(t)^n \exp(-tna) = 
 [\mgf_{Y_1}(t) \exp(-ta)]^n.
\]

Hence Chernoff bounds show that the probability that
$X$ is far away from its mean decreases exponentially
in $n$.  (The lower tail analysis is similar.)

A Chernoff type bound is central to the 
$M$-estimator for $\mu$
of Catoni~\cite{catoni2012}, and approximation
algorithms derived from it~\cite{huberPrePrint14}.
Any improvement in the Chernoff bound through
smoothing leads immediately to an improvement in 
the error bounds of these algorithms.

\section{Smoothing the Markov inequality}

In order to improve these inequalities using
smoothing, we need 
two simple facts about expected value.

\begin{lemma}
For measurable functions $f$ and $g$ with 
$f(x) \leq g(x)$ for all $x$, and a random variable 
$X$ such that $f(X)$ and $g(X)$ are integrable,
\[
\mean[f(X)] \leq \mean[g(X)].
\]
\end{lemma}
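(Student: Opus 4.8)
The plan is to reduce the statement to the single primitive fact that the expectation of a nonnegative random variable is nonnegative. (This is immediate from the definition of $\mean$ as a supremum over simple functions dominated by the integrand, or equivalently from the layer-cake identity $\mean[Z] = \int_0^\infty \prob(Z \geq t)\,dt$ valid for $Z \geq 0$.) First I would introduce the function $h = g - f$. Since $f(x) \leq g(x)$ for every $x$, we have $h(x) \geq 0$ for all $x$, so $h(X)$ is a nonnegative random variable. Because $h(X) = g(X) - f(X)$ is a difference of two integrable random variables, it is itself integrable.

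Next I would apply linearity of expectation to the integrable random variables $f(X)$ and $g(X)$ to obtain $\mean[h(X)] = \mean[g(X)] - \mean[f(X)]$. The primitive fact applied to $h(X) \geq 0$ gives $\mean[h(X)] \geq 0$, and rearranging yields $\mean[f(X)] \leq \mean[g(X)]$, which is the claim.

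The only step needing a little care — and the closest thing to an obstacle here — is the integrability bookkeeping: one must verify that $h(X)$ is integrable before splitting $\mean[h(X)]$ as $\mean[g(X)] - \mean[f(X)]$, since the form of linearity used requires both pieces to have finite expectation, and this is exactly what the hypothesis that $f(X)$ and $g(X)$ are integrable supplies. An alternative route, avoiding linearity altogether, is to invoke monotonicity of the Lebesgue integral directly; but that is essentially the statement itself, so routing the argument through the nonnegative random variable $h(X)$ and the tail-integral representation keeps it genuinely self-contained.
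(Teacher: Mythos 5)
Your proof is correct. The paper states this lemma without proof, treating it as one of two basic facts about expectation, so there is no argument in the paper to compare against; your route --- setting $h = g - f \geq 0$, checking integrability of $h(X)$, and combining linearity of expectation with nonnegativity of the expectation of a nonnegative random variable --- is the standard derivation and fills the gap cleanly. Your attention to the integrability bookkeeping before splitting $\mean[h(X)]$ is exactly the right point of care.
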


\begin{lemma}
Let $\ind(\cdot)$ denote the indicator function that is
1 when the argument is true and 0 when it is false.
Then
\[
\mean[\ind(X \in A)] = \prob(X \in A).
\]
\end{lemma}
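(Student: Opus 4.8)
The plan is to recognize that $Y := \ind(X \in A)$ is itself a random variable taking only the two values $0$ and $1$, and then to compute $\mean[Y]$ directly from the definition of expectation for such a two-valued variable.

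First I would dispatch the bookkeeping. Since $A$ is a measurable set and $X$ a random variable, $\{X \in A\}$ is an event, and $\omega \mapsto \ind(X(\omega) \in A)$ is a measurable function of $\omega$; hence $Y$ is a legitimate random variable. Because $0 \leq Y \leq 1$, it is bounded and therefore integrable, so $\mean[Y]$ exists and is finite, and there is no subtlety about the expectation being undefined.

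Next comes the one-line computation. By the definition of the indicator, $\{Y = 1\} = \{X \in A\}$ and $\{Y = 0\} = \{X \notin A\}$, and these two events partition the sample space. For a random variable supported on $\{0,1\}$ the expectation is simply the weighted sum $\mean[Y] = 0 \cdot \prob(Y = 0) + 1 \cdot \prob(Y = 1) = \prob(Y = 1)$, and substituting $\prob(Y = 1) = \prob(X \in A)$ yields the claim. An equivalent route is to invoke the change-of-variables formula (the law of the unconscious statistician): $\mean[\ind(X \in A)] = \int \ind(x \in A)\, d\mu_X(x) = \mu_X(A) = \prob(X \in A)$, where $\mu_X$ denotes the distribution of $X$.

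I do not anticipate a genuine obstacle here; the statement is essentially a restatement of the definition of the probability of an event in terms of expectation. The only point deserving any care is to make the measurability and integrability hypotheses explicit before performing the computation, which is why I would front-load that remark. Trivial as it is, this lemma is precisely the hinge that will let the later smoothing arguments convert a bound on $\prob(X \geq a)$ into a bound on $\mean[g(X)]$ for a suitably chosen function $g$.
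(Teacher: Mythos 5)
Your proof is correct and standard: the key observation that $\ind(X \in A)$ is a $\{0,1\}$-valued random variable whose expectation reduces to $\prob(X \in A)$ is exactly the right (and essentially only) argument, and your measurability and integrability remarks are sound. The paper itself states this lemma without proof, treating it as a basic fact about expectation, so there is nothing to contrast your approach with; your write-up simply supplies the routine verification the author omitted.
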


Combined, this gives a simple proof of Markov's inequality.
\begin{lemma}
For any $a \geq 0$ and integrable random variable $X$,
$\prob(X \geq a) \leq \mean[X]/a.$
\end{lemma}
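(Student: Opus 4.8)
The plan is to combine the two preceding lemmas with a single well-chosen pointwise inequality between functions on $\mathbb{R}$. The key observation is that for any $a > 0$, the function $g(x) = x/a$ dominates the indicator function $f(x) = \ind(x \geq a)$ at every real $x$: when $x \geq a$ we have $x/a \geq 1 = \ind(x \geq a)$, and when $x < a$ we have $\ind(x \geq a) = 0 \leq x/a$ precisely because $X$ is nonnegative (so $x \geq 0$ on the support, giving $x/a \geq 0$). The case $a = 0$ is handled separately and trivially, since $\prob(X \geq 0) \leq 1 = \mean[X]/0$ is interpreted in the usual extended-reals sense, or simply noted as degenerate.

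First I would fix $a > 0$ and write down the pointwise bound $\ind(x \geq a) \leq x/a$, valid for all $x \geq 0$ (hence almost surely for $X$, since $X$ is nonnegative). Second, I would apply the first lemma with $f(x) = \ind(x \geq a)$ and $g(x) = x/a$ to conclude $\mean[\ind(X \geq a)] \leq \mean[X/a] = \mean[X]/a$, using linearity of expectation for the last equality. Third, I would invoke the second lemma to rewrite the left-hand side as $\prob(X \geq a)$, yielding $\prob(X \geq a) \leq \mean[X]/a$ as desired.

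The only point requiring care — and the closest thing to an obstacle — is justifying that the pointwise inequality $\ind(x \geq a) \leq x/a$ may be promoted to an expectation inequality for the random variable $X$ itself. The first lemma is stated for functions satisfying $f(x) \leq g(x)$ \emph{for all} $x$, whereas here the inequality $0 \leq x/a$ only holds for $x \geq 0$. Since $X$ is a nonnegative random variable, $\prob(X \geq 0) = 1$, so one can either restrict attention to the event $\{X \geq 0\}$, or simply replace $g$ by $\max(x/a, \ind(x \geq a))$ to get a genuine everywhere-inequality with the same expectation bound. Either way the integrability hypotheses of the first lemma are met: $\ind(X \geq a)$ is bounded hence integrable, and $X/a$ is integrable because $\mean[X]$ is finite by assumption. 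With that technicality dispatched, the three-line argument goes through.
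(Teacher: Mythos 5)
Your proof is correct and takes essentially the same route as the paper: a pointwise domination of the indicator $\ind(x \geq a)$ by the line $x/a$ (valid on $x \geq 0$), followed by monotonicity of expectation and the identity $\mean[\ind(X \geq a)] = \prob(X \geq a)$. The technicality you flag about negative $x$ is handled in the paper by writing the bounding function as $(x/a)\ind(x \geq 0)$, which is the same fix you propose.
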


\begin{proof}
Note that $\ind(x \geq a) \leq (x/a)\ind(x \geq 0)$ (see Figure~\ref{FIG:markovbound}.)  So
\[
\prob(X \geq a) = \mean[\ind(X \geq a)]
 \leq \mean[X/a] = \mean[X]/a.
\]
\end{proof}

\begin{center}
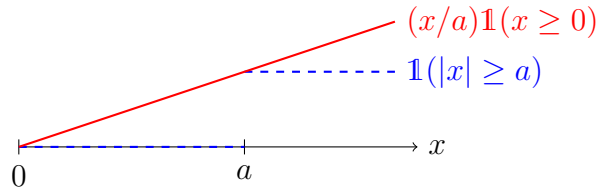
\begin{figure}[ht]
\begin{center}
 \begin{tikzpicture}
 \draw[->] (0,0) -- (5.3,0) node[right] {$x$};
 \draw (3,0.1) -- (3,-0.1) node[below] {$a$};
 \draw (0,0.1) -- (0,-0.1) node[below] {$0$};
 \draw[thick,blue,dashed] (0,0) -- (3,0);
 \draw[thick,blue,dashed] (3,1) -- (5,1) node[right] {$\ind(|x| \geq a)$};
 \draw[thick,red] (0,0) -- (5,5/3) node[right]
   {$(x/a)\ind(x \geq 0)$};
 \end{tikzpicture}
 \end{center}
 \caption{Bounding function to show Markov's inequality}
 \label{FIG:markovbound}
 \end{figure}	
\end{center}

Now consider the tight example from earlier where all of the probability
mass is either at 0 or at $a$.  But then suppose we add
a uniform random variable centered at $0$ to $X$.  
Write $U \sim \unifdist([-c,c])$.  Then if $X$ is either
at 0 or $a$ (and $c \leq a$), then there is a $1/2$ chance
that $X + U < a$ so the bound is halved for the original
tight example.

Adding a random variable with mean 0 to $X$ does
not change the mean, which is important for the 
Monte Carlo applications mentioned earlier.

Of course, if $c$ is small, then the formerly tight
example can be 
altered slightly by moving the probability mass at $a$ to 
$a + c$, in which
case $X + U \geq a$ if $X = a$.  As $c$ increases, 
however, this becomes harder to do while keeping 
$p_a$ large.

To deal with this and other possibilities, it helps
to note that
\[
\prob(X + U \geq a) = \mean[\prob(X + U \geq a|X)].
\]

Let $f_1(X) = \prob(X + Y \geq a|X)$.  Then
$f_1$ is piecewise linear, and 
connects the points (0,0), $(a-c,0)$, $(a+c,1)$ and
then is constant 1 for values beyond $(a+c)$ (see
the dotted line in Figure~\ref{FIG:markovsmooth}.)
\begin{lemma}
\label{LEM:markovsmooth}
For $U \sim \unifdist([-c,c])$ where $c \leq a$ independent
of a nonnegative random variable $X$ with finite mean,
$\prob(X + U \geq a) = \mean[f_1(X)]$ where 
\[
f_1(x) = \ind(x \geq a + c) +
 \frac{x-a+c}{2c} \ind(x \in [a-c,a+c]).
\]
\end{lemma}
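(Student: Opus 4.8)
The plan is to condition on the value of $X$ and read off the conditional tail probability of $X+U$ directly from the density of $U$. First I would invoke the identity noted just before the lemma, namely $\prob(X+U \geq a) = \mean[\prob(X+U \geq a \mid X)]$, which holds by the tower property together with Lemma~2; by independence of $U$ and $X$ the inner conditional probability is the deterministic function $g(x) := \prob(U \geq a-x)$ evaluated at $X$. So it suffices to show $g(x) = f_1(x)$ for every $x$ in the support of $X$, i.e. for every $x \geq 0$.

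Next I would compute $g$ by splitting on where the threshold $a-x$ sits relative to $[-c,c]$, the support of $U$. If $x \geq a+c$ then $a-x \leq -c$ and the event $\{U \geq a-x\}$ is almost sure, so $g(x)=1$; if $x \leq a-c$ then $a-x \geq c$ and the event is null, so $g(x)=0$; and for $x \in [a-c,a+c]$ the threshold lies in $[-c,c]$, so integrating the density $1/(2c)$ gives
\[
\prob(U \geq a-x) = \frac{c-(a-x)}{2c} = \frac{x-a+c}{2c}.
\]
Assembling the three cases yields $g(x) = \ind(x \geq a+c) + \frac{x-a+c}{2c}\,\ind(x \in [a-c,a+c])$, which is exactly $f_1(x)$. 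This is where the hypothesis $c \leq a$ enters: it forces $a-c \geq 0$, so the linear ramp starts at a nonnegative abscissa and the formula never has to describe $g$ on negative $x$ outside the support of $X$.

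Finally I would substitute $g = f_1$ into the first display to conclude $\prob(X+U \geq a) = \mean[f_1(X)]$. No integrability condition beyond $\mean[X] < \infty$ is needed, since $0 \leq f_1 \leq 1$ makes $f_1(X)$ bounded and hence integrable, so the tower property applies without comment. I do not expect a real obstacle here; the only care required is the endpoint bookkeeping — checking that the three pieces agree at $x = a-c$ and $x = a+c$ (the ramp equals $0$ and $1$ there, respectively) and noting that the two boundary points $\pm c$ of the uniform law carry zero mass, so the choice of $\geq$ versus $>$ is immaterial throughout.
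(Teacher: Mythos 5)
Your proposal is correct and follows essentially the same route as the paper's proof: condition on $X$, use independence to reduce to $\prob(U \geq a - x)$, and split into the three cases $x \geq a+c$, $x \leq a-c$, and $x \in [a-c,a+c]$. The extra remarks on where $c \leq a$ is used and on the endpoint conventions are sound additions but do not change the argument.
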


\begin{proof}
  Start with
  \begin{align*}
  \prob(X + U \geq a) &= \mean(\ind(X + U \geq a)) = \mean[\mean[\ind(X + U \geq a)|X]] \\
   &= \mean[\mean[\ind(U \geq a - X)|X]] 
   = \mean[\prob(U \geq a - X|X)].
   \end{align*}
   If $X \geq a + c$, then $a - X \leq -c$
   and $\prob(U \geq a - X|X) = 1$.  Similarly, if 
   $X \leq a - c$, then $a - X \geq c$ and 
   $\prob(U \geq a - X|X) = 0$.
   
   If $X \in [a-c,a+c]$, then 
   \[
   \prob(U \geq a - X|X) = (c-(a-X))/(2c) 
     = (X-a+c)/(2c).
     \]	
     which gives the result.
\end{proof}

\begin{center}
\begin{figure}[ht]
\begin{center}
 \begin{tikzpicture}
 \draw[->] (0,0) -- (6.3,0) node[right] {$x$};
 \draw (3,0.1) -- (3,-0.1) node[below]  {$a$};
 \draw (4,0.1) -- (4,-0.1) node[below]  {$a+c$};
 \draw (2,0.1) -- (2,-0.1) node[below]  {$a-c$};
 \draw (0,0.1) -- (0,-0.1) node[below]  {$0$};
 \draw[thick,blue,dashed] (0,0) -- (2,0) --
   (4,1) -- (6,1) node[right] {$f_1(x)$};
 \draw[thick,red] (0,0) -- (6,6/4) node[right]
   {$x/(a+c)$};
 \end{tikzpicture}
 \end{center}
 \caption{Additive smoothing for $X$}
 \label{FIG:markovsmooth}
 \end{figure}
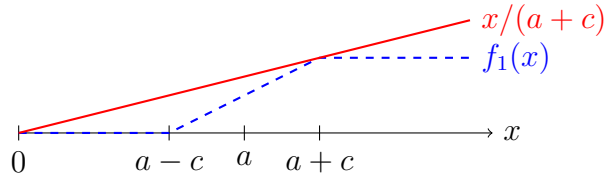	
\end{center}

To make the bounding line $x/(a+c)$ as small as possible,
we  
should set $c$ to be as large as possible.
Making $c = a$ gives a Markov inequality with 
a bound that is one half of what it was originally.

\begin{lemma}
  For integrable $X$, $a \geq 0$, and $U$ a random
  variable independent of $X$ that is uniform 
  over $[-a,a]$,
  \[
  \prob(X + U \geq a) \leq (1/2)\mean[X]/a
  \]
\end{lemma}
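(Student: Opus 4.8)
The plan is to apply Lemma~\ref{LEM:markovsmooth} with the extreme choice $c = a$, which is the largest value the hypothesis $c \le a$ allows, and then to dominate the resulting piecewise-linear function by a single line through the origin whose slope is half that of the line used in the ordinary Markov bound.

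First I would take $U \sim \unifdist([-a,a])$ and invoke Lemma~\ref{LEM:markovsmooth} with $c = a$. Then $a - c = 0$, $a + c = 2a$, and $(x - a + c)/(2c) = x/(2a)$, so the lemma yields
\[
\prob(X + U \ge a) = \mean[f_1(X)], \qquad
f_1(x) = \ind(x \ge 2a) + \frac{x}{2a}\,\ind(x \in [0,2a]).
\]
Next I would check the pointwise inequality $f_1(x) \le x/(2a)$ for all $x \ge 0$: on $[0,2a]$ it holds with equality, while for $x > 2a$ we have $f_1(x) = 1 < x/(2a)$. (For $x < 0$ the function $f_1$ is zero, and since $X$ is nonnegative this range never contributes, exactly as in the classical Markov argument.) Feeding $f_1(X) \le X/(2a)$ into the monotonicity-of-expectation lemma gives
\[
\prob(X + U \ge a) = \mean[f_1(X)] \le \mean\left[ \frac{X}{2a} \right] = \frac{1}{2}\cdot\frac{\mean[X]}{a},
\]
which is the claim.

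I do not anticipate a real obstacle: all the work sits in Lemma~\ref{LEM:markovsmooth}, and what is left is the observation that the bounding line $x/(a+c)$ of Figure~\ref{FIG:markovsmooth} has slope $1/(2a)$ exactly when $c$ is pushed to its maximum. The only points deserving a sentence of care are that $a = 0$ should be excluded (the bound is then vacuous, as for Markov itself), that $c = a$ is admissible because Lemma~\ref{LEM:markovsmooth} asks only for $c \le a$, and --- the feature that matters for the Monte Carlo applications --- that $\mean[U] = 0$, so the smoothing changes neither $\mean[X + U]$ nor the right-hand side.
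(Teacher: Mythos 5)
Your proof is correct and is exactly the argument the paper intends: the lemma is stated without a formal proof, but the surrounding text (``To make the bounding line $x/(a+c)$ as small as possible, we should set $c$ to be as large as possible. Making $c=a$ gives a Markov inequality with a bound that is one half of what it was originally.'') is precisely your specialization of Lemma~\ref{LEM:markovsmooth} to $c=a$ followed by the domination $f_1(x) \le x/(2a)$. Your added remarks --- that nonnegativity of $X$ is needed for the pointwise bound and that $a=0$ must be excluded --- correctly flag minor imprecisions in the paper's statement rather than gaps in your own argument.
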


Note that in the particular case that $X$ is a continuous 
random variable with decreasing density, then it is not 
necessary to add the smoothing variable $U$ to achieve
this improved bound (see Figure~\ref{FIG:density}.)  

\begin{lemma}
\label{LEM:removeu}
  Let $X$ be a nonnegative continuous random variable with
  decreasing density, $0 \leq c \leq a$, and $U$ 
  be a random variable
  independent of $X$ such that $U \sim \unifdist([-c,c])$, then 
  $\prob(X + U \geq a) \geq \prob(X \geq a)$.
\end{lemma}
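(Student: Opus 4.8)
The plan is to reduce the statement to a one-dimensional comparison of the density of $X$ at points symmetric about $a$, and then exploit monotonicity of the density. Throughout I would read ``decreasing'' in the weak (nonincreasing) sense, and dispose at the outset of the trivial case $c = 0$, where $U \equiv 0$ and the two sides coincide.

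First I would condition on the value of $|U|$. Since $U \sim \unifdist([-c,c])$ is symmetric, given $|U| = s$ the variable $U$ equals $+s$ or $-s$ with probability $1/2$ each, and $|U|$ itself is $\unifdist([0,c])$. Writing $g$ for the density of $X$, this gives
\[
\prob(X + U \ge a) = \frac{1}{c}\int_0^c \frac{1}{2}\bigl[\prob(X \ge a - s) + \prob(X \ge a + s)\bigr]\, ds .
\]
Because $\prob(X \ge a) = \tfrac{1}{c}\int_0^c \prob(X \ge a)\, ds$, it suffices to prove the pointwise inequality $\prob(X \ge a - s) + \prob(X \ge a + s) \ge 2\,\prob(X \ge a)$ for every $s \in [0,c]$, i.e., since $X$ is continuous, $\prob(a - s \le X < a) \ge \prob(a \le X < a + s)$.

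Next I would write both sides as integrals of the density. Here $a - s \ge a - c \ge 0$, so every argument below lies in the region where $g$ is defined and nonincreasing. Substituting $x = a - u$ and $x = a + u$ respectively,
\[
\prob(a - s \le X < a) = \int_0^s g(a - u)\, du, \qquad \prob(a \le X < a + s) = \int_0^s g(a + u)\, du .
\]
Since $a - u \le a + u$ and $g$ is decreasing, $g(a - u) \ge g(a + u)$ for each $u \in [0,s]$, so the first integral dominates the second; integrating the resulting pointwise bound over $s \in [0,c]$ finishes the argument. (An equivalent route is to start from Lemma~\ref{LEM:markovsmooth}: the difference $f_1(x) - \ind(x \ge a)$ vanishes outside $[a-c,a+c)$, is nonnegative on $[a-c,a)$ and nonpositive on $[a,a+c)$, and the change of variables $x = a \mp s$ yields $\mean[f_1(X)] - \prob(X \ge a) = \int_0^c \tfrac{c-s}{2c}\,[g(a - s) - g(a + s)]\, ds \ge 0$ by the same monotonicity.)

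I do not expect a genuine obstacle here: the whole content is the observation that a decreasing density puts at least as much mass just below $a$ as just above it. The only points requiring care are bookkeeping ones — using $c \le a$ to keep every evaluation of $g$ inside $[0,\infty)$ where monotonicity is assumed, using continuity of $X$ to split $\prob(X \ge a - s) - \prob(X \ge a)$ as $\prob(a - s \le X < a)$, and handling $c = 0$ separately so that the factor $1/c$ is never an issue.
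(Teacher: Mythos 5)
Your proof is correct and follows essentially the same route as the paper's: both exploit the symmetry of $U$ to pair up $+s$ and $-s$, reducing the claim to the comparison $\prob(X \in [a-s,a]) \geq \prob(X \in [a,a+s])$, which follows from the decreasing density. Your version merely spells out the final comparison via pointwise bounds on the density, a step the paper asserts directly.
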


\begin{proof}
  First let us consider the probability we are looking for.
  \begin{align*}
  \prob(X + U \geq a) &= \int_{-c}^c \frac{1}{2c} \prob(X \geq a - u) \ du
  \end{align*}
  Consider $u \in [-c,c] = [-c,0] \cup [0,c]$.  
  When $u \geq 0$, $[a-u,\infty) = [a,\infty) \cup [a-u,a]$.  When $u \leq 0$, we have
  $[a-u,\infty) = [a,\infty) \setminus [a,a-u]$.
  So
  \begin{align*}
  \prob(X + U \geq a)
  &= \int_{-c}^{c} \frac{\prob(X \geq a)}{2c} \ du 
     + 
     \int_0^c 
     \frac{\prob(X \in [a-u,a]}{2c} \ du
     - \int_{-c}^0 
     \frac{\prob(X \in [a,a-u])}{2c} \ du 
  \end{align*}
  By using substitution in the last integral 
  to change the sign we obtain
  \begin{align*}
  \prob(X + U \geq a)
  &= \prob(X \geq a) 
     + \frac{1}{2c}\int_{0}^c 
     \prob(X \in [a-u,a]) -
     \prob(X \in [a,a+u]) \ du.
  \end{align*}
  Because of the declining 
  density, the second integral
  is nonnegative which gives the result.
\end{proof}

\begin{center}
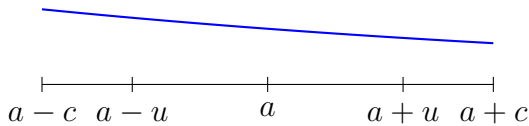
\begin{figure}[ht]
\begin{center}
 \begin{tikzpicture}
 \draw (0,0) -- (6,0);
 \draw (3,0.1) -- (3,-0.1) node[below] {$a$};
 \draw (6,0.1) -- (6,-0.1) node[below] {$a+c$};
 \draw (0,0.1) -- (0,-0.1) node[below] {$a-c$};
 \draw (4.8,0.1) -- (4.8,-0.1) node[below] {$a+u$};
 \draw (1.2,0.1) -- (1.2,-0.1) node[below] {$a-u$};
 \draw[thick,blue] plot[domain=0:6] (\x,{exp(-0.1*\x)});
\end{tikzpicture}
 \end{center}
 \caption{The probability $X$ is near $a - u$
 and $X+U \geq a$ (so $U$ is near $u$) is greater than the 
 chance that $X$ is near $a + u$ and $U$ is near $-u$ so that 
 $X + U < a$.}
 \label{FIG:density}
 \end{figure}	
\end{center}

For example, say $X$ is an exponential
random variable with density
$f_X(x) = \exp(-x)$ for $x \geq 0$
(and 0 otherwise.)  Then $\mean[X] = 1$, so $\prob(X \geq 1) \leq 1/2$ with Lemma~\ref{LEM:markovsmooth}
whereas the regular Markov inequality gives an upper bound of 1.
The exact tail probability is
$\exp(-1) = 0.3678\ldots$.

\section{Smoothing the Chebyshev inequality}

To show Markov's inequality
we used a bounding
line, for Chebyshev we use a bounding parabola.
See Figure~\ref{FIG:chebyshevbound}.
\begin{lemma}
  For $a \geq 0$, $\ind(|x - \mu| \geq a) \leq (x-\mu)^2/a^2$.  \end{lemma}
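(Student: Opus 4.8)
The plan is to treat this as a purely pointwise inequality between the two functions $x \mapsto \ind(|x-\mu| \geq a)$ and $x \mapsto (x-\mu)^2/a^2$, with no probability involved; once it is in hand, taking expectations and applying the two expectation lemmas recovers Chebyshev exactly as the bounding line $x/a$ recovered Markov. I will assume $a > 0$; the case $a = 0$ is degenerate (the right-hand side is undefined at $x = \mu$), and is best read in the cleared form $a^2\,\ind(|x-\mu| \geq a) \leq (x-\mu)^2$, where it is immediate. With $a>0$ fixed, the argument is a case split on the value of the indicator.

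First, if $|x - \mu| \geq a$, then the left-hand side equals $1$. Squaring the inequality $|x-\mu| \geq a \geq 0$ gives $(x-\mu)^2 \geq a^2$, hence $(x-\mu)^2/a^2 \geq 1$, so the inequality holds, with equality exactly when $|x-\mu| = a$. Second, if $|x-\mu| < a$, the left-hand side is $0$ while the right-hand side $(x-\mu)^2/a^2$ is nonnegative, so the inequality holds again. These two cases are exhaustive, which completes the proof.

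There is no real obstacle here; the only points worth flagging are the $a=0$ degeneracy noted above, and the observation that the parabola $(x-\mu)^2/a^2$ touches the indicator at $x = \mu \pm a$. This tangency is precisely why the plain Chebyshev bound is tight and why, in the next step, one smooths $X$ by an independent mean-zero perturbation: the smoothing replaces the indicator by a function that sits strictly below the indicator near $\mu \pm a$, allowing the constant in front of $(x-\mu)^2$ to be lowered.
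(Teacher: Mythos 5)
Your proof is correct and is exactly the ``straightforward'' case split that the paper omits: when $|x-\mu|\geq a$ the indicator is $1$ and squaring gives $(x-\mu)^2/a^2\geq 1$, and otherwise the indicator is $0$ while the right-hand side is nonnegative. The remark about the $a=0$ degeneracy and the tangency at $\mu\pm a$ is a nice bonus but does not change the argument.
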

The proof is straightforward, and the result
immediately gives Chebyshev's inequality.
\begin{lemma}
  For a random variable $X$ with finite first and second
  moments, $\prob(|X - \mu| \geq a) \leq \var(X)/a^2$.
\end{lemma}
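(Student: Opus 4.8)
The plan is to repeat verbatim the structure of the proof of Markov's inequality given above, only swapping the bounding line for the bounding parabola just established. First I would use the indicator identity (the second preliminary lemma) to rewrite the tail probability as an expectation,
\[
\prob(|X - \mu| \geq a) = \mean[\ind(|X - \mu| \geq a)].
\]
Then I would apply the bounding-parabola lemma pointwise, $\ind(|x-\mu| \geq a) \leq (x-\mu)^2/a^2$, and feed this into the monotonicity lemma for expectations to obtain
\[
\mean[\ind(|X - \mu| \geq a)] \leq \mean\!\left[\frac{(X-\mu)^2}{a^2}\right] = \frac{\mean[(X-\mu)^2]}{a^2}.
\]
Finally I would identify $\mean[(X-\mu)^2]$ with $\var(X)$, which holds by the definition of variance together with linearity of expectation and the assumed finiteness of the first two moments, completing the argument.

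The only thing that needs checking is that the monotonicity lemma genuinely applies, i.e.\ that both $f(X) = \ind(|X-\mu| \geq a)$ and $g(X) = (X-\mu)^2/a^2$ are integrable. The first is a bounded function of $X$, hence integrable; the second is integrable precisely because $X$ is assumed to have a finite second moment (and a finite mean, so that $\mu = \mean[X]$ is well defined and $(X-\mu)^2 = X^2 - 2\mu X + \mu^2$ is a sum of integrable terms). So there is no genuine obstacle here: the proof is as short as the Markov one, and the ``hard part'' amounts to nothing more than lining up the hypotheses with the two preliminary lemmas. (One should keep the interesting regime $a > 0$ in mind, since for $a = 0$ the stated bound is vacuous.)

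If I wanted to make the parallel with the smoothed Markov argument fully explicit, I would point out that here the bounding function $g$ is symmetric about $\mu$, so the next section's smoothing step will need to push probability mass away from $\mu$ in \emph{both} directions rather than in one; but that is a remark about what comes after, not part of this proof.
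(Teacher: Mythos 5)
Your proof is correct and follows exactly the paper's own argument: rewrite the tail probability as $\mean[\ind(|X-\mu|\geq a)]$, bound the indicator pointwise by the parabola $(x-\mu)^2/a^2$, and apply monotonicity of expectation. The integrability check you add is a sensible (if routine) extra precaution, but the approach is the same.
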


\begin{proof}
\[
\prob(|X - \mu| \geq a) =\mean[\ind(|X - \mu| \geq a]
 \leq \mean\left[\frac{(X - \mu)^2}{a^2}\right] = \frac{\var(X)}{a^2}.
\]
\end{proof}

\begin{center}
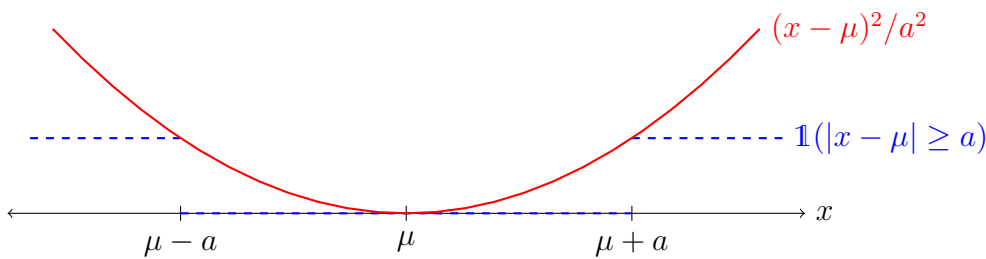
\begin{figure}[ht]
 \begin{tikzpicture}
 \draw[<->] (-5.3,0) -- (5.3,0) node[right] {$x$};
 \draw (3,0.1) -- (3,-0.1) node[below] {$\mu + a$};
 \draw (-3,0.1) -- (-3,-0.1) node[below] {$\mu -a$};
 \draw (0,0.1) -- (0,-0.1) node[below] {$\mu$};
 \draw[thick,blue,dashed] (-5,1) -- (-3,1);
 \draw[thick,blue,dashed] (-3,0) -- (3,0);
 \draw[thick,blue,dashed] (3,1) -- (5,1) node[right] 
   {$\ind(|x - \mu| \geq a)$};
 \draw[thick,red] plot[domain=-4.7:4.7] (\x,{\x*\x/9}) node[right]
   {$(x-\mu)^2/a^2$};
 \end{tikzpicture}
 \caption{Bounding function to show Chebyshev's inequality.}
 \label{FIG:chebyshevbound}
 \end{figure}	
\end{center}

For Chebyshev, we wish to once again smooth the 
random variable as much as possible by adding
$U \sim \unifdist([-c,c])$ to $X$.  Note that it is 
not possible to reduce the bounding function by more
than a factor of $2$, since at $a$ the smoothed
function will be linearly interpolating between the 
value 0 at $a - c$ and 1 at $a + c$.  Write
$\prob(|X + U - \mu| \geq a|X) = f_2(X)$.

Then we must choose a value for $c$ so that 
$f_2(x) \leq (1/2)(x - \mu)^2/a^2$.  The easiest
way to do this is to make $f_2(x)$ tangent to the 
parabola at $\mu+a$ and $\mu - a$.  This happens
when $c = a/2$. See
Figure~\ref{FIG:chebyshevsmooth}.

\begin{lemma}
  Let $X$ have finite first and second moments, 
  $a \geq 0$, and 
  $U \sim \unifdist([-(1/2)a,(1/2)a]$ be independent
  of $X$.  Then
  \[
  \prob(|X + U - \mu| \geq a) \leq \frac{1}{2} \cdot 
    \frac{\var(X)}{a^2}.
  \]
\end{lemma}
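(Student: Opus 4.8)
The plan is to follow the template of the Markov smoothing argument: condition on $X$, identify the resulting conditional tail probability as an explicit piecewise-linear function $f_2$, verify that $f_2$ lies below the half-height parabola $(x-\mu)^2/(2a^2)$ at every point, and then apply the monotonicity-of-expectation lemma. First I would use the tower property to write
\[
\prob(|X + U - \mu| \geq a) = \mean\bigl[\mean[\ind(|X+U-\mu|\geq a)\mid X]\bigr] = \mean[f_2(X)],
\]
where $f_2(x) := \prob(|x + U - \mu| \geq a)$; the conditioning is legitimate because $U$ is independent of $X$.

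Next I would compute $f_2$. Writing $y = x - \mu$ and $c = a/2$, the event $\{|x + U - \mu| \geq a\}$ is the disjoint union $\{U \geq a - y\} \cup \{U \leq -a - y\}$, and since these two thresholds differ by $2a$ while $U$ ranges over an interval of length $a$, at most one of the two events has positive probability. A case split according to whether $|y|$ lies below $a/2$, between $a/2$ and $3a/2$, or above $3a/2$ then yields
\[
f_2(x) = \ind(|y| \geq \tfrac{3}{2}a) + \frac{|y| - a/2}{a}\,\ind\bigl(\tfrac{1}{2}a \leq |y| \leq \tfrac{3}{2}a\bigr), \qquad y = x - \mu .
\]

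The crux is the pointwise inequality $f_2(x) \leq (x-\mu)^2/(2a^2)$. For $|y| \leq a/2$ the left side is $0$, so there is nothing to check; for $|y| \geq 3a/2$ the right side is at least $(3a/2)^2/(2a^2) = 9/8 > 1 \geq f_2(x)$. The only substantive range is $a/2 \leq |y| \leq 3a/2$: putting $s = |y|/a \in [\tfrac12,\tfrac32]$ reduces the inequality to $s - \tfrac12 \leq \tfrac12 s^2$, i.e.\ to $(s-1)^2 \geq 0$. This is exactly where the choice $c = a/2$ is used — it is the unique value making the linear pieces of $f_2$ tangent to the parabola at $x = \mu \pm a$, so the constant $1/2$ cannot be lowered by this method. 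Feeding the pointwise bound into the monotonicity lemma then finishes the proof:
\[
\prob(|X+U-\mu| \geq a) = \mean[f_2(X)] \leq \mean\!\left[\frac{(X-\mu)^2}{2a^2}\right] = \frac{1}{2}\cdot\frac{\var(X)}{a^2}.
\]

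I expect the main obstacle to be purely bookkeeping in the construction of $f_2$: one must check that the two tail events are genuinely disjoint for every $y$ and that the breakpoints land at $a/2$ and $3a/2$. After that, the argument is the single observation $(s-1)^2 \geq 0$ combined with the two elementary expectation facts already recorded.
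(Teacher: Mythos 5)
Your proposal is correct and follows the same route as the paper: condition on $X$ to express the tail probability as $\mean[f_2(X)]$, compare the piecewise-linear $f_2$ pointwise to the half-height parabola, and conclude by monotonicity of expectation. In fact you supply the two steps the paper leaves implicit (the explicit verification $s - \tfrac12 \leq \tfrac12 s^2 \Leftrightarrow (s-1)^2 \geq 0$, and the correct breakpoints of $f_2$ at $|x-\mu| = \tfrac12 a$ and $\tfrac32 a$, which the paper's displayed formula garbles), so nothing further is needed.
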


\begin{proof}
  As in the proof of Lemma~\ref{LEM:markovsmooth},
  it is straightforward to show that 
  $\prob(|X + U - \mu| \geq a) = \mean[f_2(X)],$
  where 
  \[
  f_2(x) = \ind(|x| > a) + \frac{|x|-(1/2)a}{a}\ind(|x| \in [(1/2)a,(3/2)a]
  \]
\end{proof}

\begin{center}
\begin{figure}[ht]
 \begin{tikzpicture}[yscale=1.3]
 \draw (-5.3,0) -- (5.3,0) node[right] {$x$};
 \draw (3,0.1) -- (3,-0.1) node[below] {$\mu + a$};
 \draw (-3,0.1) -- (-3,-0.1) node[below] {$\mu - a$};
 \draw (4.5,0.1) -- (4.5,-0.1) node[below] {$\mu + \frac{3}{2}a$};
 \draw (-4.5,0.1) -- (-4.5,-0.1) node[below] {$\mu - \frac{3}{2}a$};
 \draw (1.5,0.1) -- (1.5,-0.1) node[below] {$\mu + \frac{1}{2}a$};
 \draw (-1.5,0.1) -- (-1.5,-0.1) node[below] {$\mu - \frac{1}{2}a$};
 \draw (0,0.1) -- (0,-0.1) node[below] {$\mu$};
 \draw[thick,blue,dashed] (-5,1) -- (-4.5,1) -- (-1.5,0) --
   (1.5,0) -- (4.5,1) -- (5,1) node[right] {$f_2(x)$};
 \draw[thick,red] plot[domain=-5:5] (\x,{\x*\x/18}) node[right]
   {$(1/2)(x-\mu)^2/a^2$};
 \end{tikzpicture}
 \caption{Smoothed function for Chebyshev.}
 \label{FIG:chebyshevsmooth}
 \end{figure}
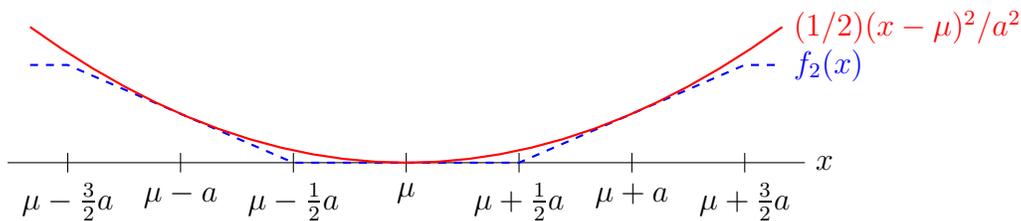	
\end{center}

Using Lemma~\ref{LEM:removeu} on
$X$ and $-X$, it is possible to
show that adding $U$ is unnecessary
for certain random variables.
\begin{lemma}
\label{LEM:smooth}  Let $a\geq 0$.
  Let $X$ be a continuous 
  nonnegative random variable
  with decreasing density over the interval 
  $[(1/2)a,(3/2)a]$ and increasing
  density over $[-(3/2)a,-(1/2)a]$.
  Then 
  \[
  \prob(|X - \mu| \geq a) \leq 
  \frac{1}{2} \frac{\var(X)}{a^2}.
  \]
\end{lemma}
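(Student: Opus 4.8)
The plan is to reduce the statement to the preceding lemma, which already gives $\prob(|X+U-\mu|\geq a)\leq \tfrac12\,\var(X)/a^2$ for $U\sim\unifdist([-\tfrac12 a,\tfrac12 a])$ independent of $X$. It therefore suffices to show that adding this $U$ can only increase the two‑sided tail probability, i.e. $\prob(|X-\mu|\geq a)\leq \prob(|X+U-\mu|\geq a)$. Assuming $a>0$ (the case $a=0$ being vacuous), the events $\{X\geq \mu+a\}$ and $\{X\leq \mu-a\}$ are disjoint, as are the corresponding events for $X+U$, so it is enough to prove the two one‑sided inequalities
\[
\prob(X+U\geq \mu+a)\geq \prob(X\geq \mu+a),\qquad \prob(X+U\leq \mu-a)\geq \prob(X\leq \mu-a),
\]
and then add them and invoke the previous lemma.

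The first inequality is Lemma~\ref{LEM:removeu} applied with threshold $\mu+a$ and $c=\tfrac12 a$: the hypothesis there is that the density of $X$ be decreasing on $[(\mu+a)-c,(\mu+a)+c]=[\mu+\tfrac12 a,\mu+\tfrac32 a]$, which is exactly the ``decreasing density over $[\tfrac12 a,\tfrac32 a]$'' assumption read relative to $\mu$. For the second inequality I would apply the same argument to $-X$: since $U$ and $-U$ have the same distribution and are independent of $X$, one has $\prob(X+U\leq \mu-a)=\prob(-X-U\geq a-\mu)=\prob(-X+U\geq a-\mu)$, and this is Lemma~\ref{LEM:removeu} for the variable $-X$ with threshold $a-\mu$ and $c=\tfrac12 a$. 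The monotonicity it demands is that the density of $-X$ be decreasing on $[(a-\mu)-\tfrac12 a,(a-\mu)+\tfrac12 a]$; substituting $z\mapsto -z$ turns this into the hypothesis that the density of $X$ be increasing on $[\mu-\tfrac32 a,\mu-\tfrac12 a]$, which is the second assumption of the lemma.

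One point to verify before reusing Lemma~\ref{LEM:removeu} in these two ways: as stated it assumes $X\geq 0$, a density decreasing everywhere, and $c\leq a$, but its proof uses none of these. The decomposition of $[a-u,\infty)$ into $[a,\infty)$ together with $[a-u,a]$ (for $u\geq 0$) or minus $[a,a-u]$ (for $u\leq 0$), and the final comparison $\prob(X\in[a-u,a])\geq \prob(X\in[a,a+u])$ for $u\in[0,c]$, only require the density to be decreasing on the single window $[a-c,a+c]$ about the threshold. So the lemma applies verbatim to $-X$ and under the localized monotonicity hypotheses here. Combining the two one‑sided bounds then yields $\prob(|X-\mu|\geq a)\leq \prob(|X+U-\mu|\geq a)\leq \tfrac12\,\var(X)/a^2$.

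The only real work is the bookkeeping in the second paragraph: checking that the $\mu$‑shifts align the stated monotonicity intervals with the threshold windows $[a-c,a+c]$ of Lemma~\ref{LEM:removeu}, and that its proof is genuinely local in the density and indifferent to sign, so that it transfers to $-X$. Once that is granted, the conclusion is a short chain of inequalities.
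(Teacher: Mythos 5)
Your proposal is correct and follows exactly the route the paper intends: the paper offers no written proof of this lemma beyond the remark that it follows by ``using Lemma~\ref{LEM:removeu} on $X$ and $-X$,'' which is precisely your two one-sided applications combined with the smoothed Chebyshev bound. Your added observations --- that the events must be read relative to $\mu$, and that Lemma~\ref{LEM:removeu}'s proof only needs the density to be monotone on the local window $[a-c,a+c]$ (so it transfers to $-X$ despite the nonnegativity and global-monotonicity wording) --- are exactly the bookkeeping the paper leaves implicit, and they are handled correctly.
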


For instance, for a standard normal
random variable $Z$, 
this upper bounds the probability  
that $|Z| \geq 1$ by 1/2 (the true
probability $|Z| \geq 1$ is about 0.3173.)

This result 
is similar to a classic
result of Gauss~\cite{gauss1823} (presented
in the next lemma), although that
result only applies for $a \geq (4/3)\mean[X^2]$.

\begin{lemma}
  For $X$ a continuous random variable
  that is unimodal with mode 0 and
  $a^2 \geq (4/3)\mean[X^2]$,
  \[
  \prob(|X| \geq a) \leq 
    \frac{4}{9} \cdot \frac{\mean[X^2]}{a^2}
  \]
\end{lemma}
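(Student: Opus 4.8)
The plan is to peel off the unimodality hypothesis at the very start using the classical representation of unimodal distributions, and then finish with a bounding-function estimate of exactly the flavor already used for Markov and Chebyshev. By a theorem of Khinchine, a continuous random variable $X$ that is unimodal with mode $0$ has the same distribution as a product $UV$, where $U \sim \unifdist([0,1])$ is independent of a real-valued random variable $V$: when $V$ equals a positive constant $b$ this is just $\unifdist([0,b])$, and a general density that is nondecreasing on $(-\infty,0]$ and nonincreasing on $[0,\infty)$ is, via its level sets, a mixture of the uniform densities on intervals $(0,b)$ and $(-b,0)$. I would take this representation as given and reduce everything to statements about $V$.

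From the representation, two short computations remain. First, by independence, $\mean[X^2] = \mean[U^2]\,\mean[V^2] = \tfrac13\,\mean[V^2]$, so $\mean[V^2] = 3\,\mean[X^2]$. Second, conditioning on $V$ and using that $U$ is uniform on $[0,1]$,
\[
\prob(|X| \geq a \mid V) \;=\; \prob(U \geq a/|V| \mid V) \;=\; \max\{0,\ 1 - a/|V|\},
\]
where the conditional probability is $0$ whenever $|V| \leq a$, including $V = 0$. Taking expectations gives $\prob(|X| \geq a) = \mean\big[\max\{0,\ 1 - a/|V|\}\big]$.

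The key step is the pointwise inequality playing the role here of the bounding line of Figure~\ref{FIG:markovbound} and the bounding parabola of Figure~\ref{FIG:chebyshevbound}: for every $w \geq 0$,
\[
\max\{0,\ 1 - a/w\} \;\leq\; \frac{4}{27}\cdot\frac{w^2}{a^2}.
\]
For $w \leq a$ the left side is $0$; for $w > a$, writing $t = w/a > 1$, the inequality rearranges to $4t^3 - 27t + 27 \geq 0$, and the cubic $g(t) = 4t^3 - 27t + 27$ has its only critical point on $[0,\infty)$ at $t = 3/2$, where $g(3/2) = 0$, so $g \geq 0$ there — and the bounding parabola is in fact tangent to the tail function at $w = \tfrac32 a$. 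Applying monotonicity of expectation and then $\mean[V^2] = 3\,\mean[X^2]$,
\[
\prob(|X| \geq a) \;\leq\; \frac{4}{27}\cdot\frac{\mean[V^2]}{a^2} \;=\; \frac{4}{27}\cdot\frac{3\,\mean[X^2]}{a^2} \;=\; \frac{4}{9}\cdot\frac{\mean[X^2]}{a^2},
\]
which is the claim. Note that the hypothesis $a^2 \geq \tfrac43\mean[X^2]$ is not needed for this argument — the inequality in fact holds for every $a > 0$ — it only marks the range in which $\tfrac49\,\mean[X^2]/a^2$ is the \emph{sharp} Gauss bound (below that range the sharp bound is $1 - a/\sqrt{3\mean[X^2]}$).

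I expect the only genuine obstacle to be making Khinchine's representation precise, since that is where continuity and unimodality actually get used; the rest is the same two-line pattern as the Markov and Chebyshev proofs. A self-contained version of that first step would write the density $f$ through its super-level sets — for $x>0$, $f(x) = \int_0^\infty \ind(0 < x < b(\lambda))\,d\lambda$ with $b(\lambda) = \sup\{x>0 : f(x) > \lambda\}$, and symmetrically for $x<0$ — which exhibits $f$ as a mixture of uniform densities and lets one read off the law of $V$ after normalizing each layer. With that in hand, the moment identity $\mean[V^2] = 3\mean[X^2]$ and the cubic inequality are both routine.
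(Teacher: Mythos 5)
The paper itself gives no proof of this lemma: it is stated as a classical result of Gauss and cited to~\cite{gauss1823}, so there is no in-paper argument to compare against. Your proof is correct and complete modulo Khinchine's representation theorem, which you correctly identify as the one nontrivial input and for which you sketch the standard layer-cake justification. The computations check out: $\mean[U^2]=1/3$ gives $\mean[V^2]=3\mean[X^2]$; the conditional tail $\prob(|X|\geq a\mid V)=\max\{0,1-a/|V|\}$ is right (with the $V=0$ case handled); and the pointwise bound $\max\{0,1-a/w\}\leq \tfrac{4}{27}w^2/a^2$ reduces for $t=w/a>1$ to $4t^3-27t+27\geq 0$, whose unique positive critical point $t=3/2$ is a zero and a minimum, so the cubic is nonnegative. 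Your observation that the stated hypothesis $a^2\geq\tfrac43\mean[X^2]$ is not actually needed for the inequality to hold (only for it to be the sharp branch of Gauss's bound) is also correct, and in fact clarifies a slightly garbled sentence in the paper, which writes the side condition as $a\geq(4/3)\mean[X^2]$ in the surrounding text but $a^2\geq(4/3)\mean[X^2]$ in the lemma. A pleasant feature of your argument is that it recasts Gauss's inequality in exactly the paper's own idiom: after conditioning on $V$, the tail function $w\mapsto\max\{0,1-a/w\}$ is dominated by a scaled parabola tangent to it at $w=\tfrac32 a$, precisely the pattern of Figures~\ref{FIG:markovbound} and~\ref{FIG:chebyshevbound}.
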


\section{Smoothing the
Chernoff inequality}

Now consider the bound on the upper tail for Chernoff
where we are trying to bound 
$\prob(X \geq a) = \mean[\ind(X \geq a)].$  Here
we use the fact that $\ind(X \geq a) \leq \exp(t(X - a))$.
(see Figure~\ref{FIG:chernoffbound}.)  This immediately
gives the upper Chernoff bound.

\begin{lemma}
 For any random variable $X$ and $t$ such that 
 $\mgf_X(t) = \mean[\exp(tX)]$ is finite,
 \[
 \prob(X \geq a) \leq \mean[\exp(t(X - a))].
 \]
\end{lemma}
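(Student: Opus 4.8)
The plan is to mirror exactly the two-step template already used for Markov and Chebyshev: first establish a pointwise inequality between the indicator $\ind(x \ge a)$ and an exponential bounding function, then take expectations and invoke the two elementary lemmas on expectation (monotonicity and $\mean[\ind(X \in A)] = \prob(X \in A)$). Concretely, I would fix a value $t$ for which $\mgf_X(t)$ is finite and claim that for every real $x$,
\[
\ind(x \ge a) \le \exp(t(x-a)).
\]
This is the content illustrated in Figure~\ref{FIG:chernoffbound}.

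To justify the pointwise inequality I would split into cases. If $x \ge a$, the left side equals $1$ and $t(x-a) \ge 0$ (here $t \ge 0$, which is the regime of the upper Chernoff bound), so $\exp(t(x-a)) \ge 1$ and the inequality holds. If $x < a$, the left side is $0$ while the right side $\exp(t(x-a))$ is strictly positive, so the inequality again holds trivially. Thus the bounding function dominates the indicator everywhere.

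Having the pointwise bound, the proof finishes in one line by applying Lemma~1 (monotonicity of expectation) to $f(x) = \ind(x \ge a)$ and $g(x) = \exp(t(x-a))$, then Lemma~2 to rewrite $\mean[\ind(X \ge a)]$ as $\prob(X \ge a)$:
\[
\prob(X \ge a) = \mean[\ind(X \ge a)] \le \mean[\exp(t(X-a))].
\]
The integrability hypotheses needed for Lemma~1 are exactly the assumption that $\mgf_X(t) = \mean[\exp(tX)]$ is finite (so $\exp(t(X-a)) = e^{-ta}\exp(tX)$ is integrable), together with the fact that the indicator is bounded.

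There is essentially no obstacle here; the only point requiring a moment of care is making sure the case analysis handles the sign of $t$ correctly, i.e.\ that we are in the $t \ge 0$ setting appropriate to the upper tail (the left-tail version with $t \le 0$ is symmetric, replacing $X \ge a$ by $X \le a$). This lemma is stated purely to set up the subsequent smoothing argument, where the exponential bounding function will be replaced by a smoothed version that sits below it near $a$, so the proof is deliberately kept as short as the Markov and Chebyshev analogues.
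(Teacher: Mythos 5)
Your proposal is correct and matches the paper's argument exactly: the paper also establishes the pointwise bound $\ind(x \geq a) \leq \exp(t(x-a))$ (illustrated in Figure~\ref{FIG:chernoffbound}) and takes expectations, with your case analysis merely spelling out the details the paper leaves implicit. Your observation that the pointwise inequality requires $t \geq 0$ is a worthwhile clarification, since the lemma as stated omits that hypothesis even though the surrounding discussion assumes it.
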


\begin{center}
\begin{figure}[ht]
 \begin{tikzpicture}[xscale=0.8,yscale=1.2]
 \begin{scope}[xshift=2.6in]
 \draw (0,0) -- (5,0);
 \draw (3,0.1) -- (3,-0.1) node[below] {$a$};
 \draw (5,0.1) -- (5,-0.1) node[below] {$a+1/t$};
 \draw (1,0.1) -- (1,-0.1) node[below] {$a-1/t$};
 \draw[thick,blue,dashed] (0,0) -- (1,0) -- (5,1) -- (6,1)       node[right] {$f_3(x)$};
 \draw[thick,red] plot[domain=0:5] (\x,{exp(0.5*(\x-3))/2}) node[right]
   {$(1/2)\exp(t(x - a))$};
   \end{scope}
 \begin{scope}
 \draw (0,0) -- (6,0);
 \draw (3,0.1) -- (3,-0.1) node[below] {$a$};
 \draw[thick,blue,dashed] (0,0) -- (3,0);
 \draw[thick,blue,dashed] (3,1) -- (5,1) node[right] 
   {$\ind(x \geq a)$};
 \draw[thick,red] plot[domain=0:5] (\x,{exp(0.5*(\x-3))}) node[right]
   {$\exp(t(x - a))$};
   \end{scope}
 \end{tikzpicture}
 \caption{Bounding function and smoothed function for Chernoff's inequality.  Here $\mean[\ind(x \geq a)] = \prob(X \geq a)$ and $\mean[f_3(X)] = \prob(X + U \geq a)$.}
 \label{FIG:chernoffbound}
 \end{figure}	
\end{center}

When we consider $(1/2)\exp(t(x-a))$, this has derivative
of $(1/2)t$ at $x = a$.  So make the width of the interval
equal to $2/t$.  Doing so gives the smoothed bound for Chernoff.
\begin{lemma}
  Let $X$ be a random variable such that for 
  $t \geq 0$, $\mgf_X(t)$ is finite, and $U$
  be a uniform random variable over $[-1/t,1/t]$ that
  is independent of $X$.  Then
  \[
  \prob(X + U \geq a) \leq (1/2)\mgf_X(t)\exp(-ta).
  \]
  For $t \leq 0$ such that $\mgf_X(t)$ is finite, then
  for $U \sim \unifdist([1/t,-1/t])$ a random variable that
  is independent of $X$,
\[
  \prob(X + U \leq a) \leq (1/2)\mgf_X(t)\exp(-ta).  
  \]
\end{lemma}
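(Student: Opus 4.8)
The plan is to follow the template of Lemma~\ref{LEM:markovsmooth} exactly: condition on $X$, recognize the conditional tail probability $\prob(X+U \geq a \mid X)$ as an explicit piecewise-linear function $f_3$ of $X$, bound $f_3$ pointwise by the exponential $(1/2)\exp(t(x-a))$, and then take expectations.

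First, for $t > 0$ and $U \sim \unifdist([-1/t,1/t])$ independent of $X$, I would write
\[
\prob(X + U \geq a) = \mean[\mean[\ind(X + U \geq a)\mid X]] = \mean[\prob(U \geq a - X\mid X)].
\]
Evaluating the inner conditional probability: it is $0$ on $\{X \leq a - 1/t\}$, it is $1$ on $\{X \geq a + 1/t\}$, and on $\{X \in [a-1/t,a+1/t]\}$ it equals $(1/t-(a-X))/(2/t) = (t(X-a)+1)/2$. Hence $\prob(X+U \geq a) = \mean[f_3(X)]$, where
\[
f_3(x) = \ind(x \geq a + 1/t) + \frac{t(x-a)+1}{2}\,\ind\!\left(x \in [a-1/t,\,a+1/t]\right),
\]
the function shown in Figure~\ref{FIG:chernoffbound}.

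The crux is the pointwise inequality $f_3(x) \leq (1/2)\exp(t(x-a))$ for all $x$. On $x \leq a - 1/t$ it is immediate since the left side vanishes. On the middle interval, substituting $s = t(x-a)$ reduces it to the tangent-line inequality $1 + s \leq e^{s}$, which holds for every real $s$; this is precisely the assertion that the line $f_3$ is tangent to $(1/2)\exp(t(x-a))$ at $x=a$, where both equal $1/2$ and both have slope $t/2$, and it is this tangency that dictates the choice of half-width $1/t$. On $x \geq a + 1/t$ I need $1 \leq (1/2)\exp(t(x-a))$; since $t(x-a) \geq 1$ there, it is enough that $e \geq 2$, which is true. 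Taking expectations and using the monotonicity of expectation (the first lemma), together with the finiteness of $\mgf_X(t)$ so that $\exp(t(X-a))$ is integrable, gives
\[
\prob(X + U \geq a) = \mean[f_3(X)] \leq (1/2)\mean[\exp(t(X-a))] = (1/2)\mgf_X(t)\exp(-ta).
\]

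For the left-tail claim with $t \leq 0$, I would reduce to the case already proved applied to $-X$: one has $\prob(X + U \leq a) = \prob((-X) + (-U) \geq -a)$, the variable $-U$ is uniform on $[-1/(-t),\,1/(-t)]$ because $\unifdist([1/t,-1/t])$ for $t \leq 0$ is the uniform law on the symmetric interval $[-1/|t|,\,1/|t|]$, and $\mgf_{-X}(-t) = \mean[\exp(tX)] = \mgf_X(t)$. Applying the first part with $(X,a,t)$ replaced by $(-X,-a,-t)$, and noting $-(-t)(-a) = -ta$, yields $\prob(X + U \leq a) \leq (1/2)\mgf_X(t)\exp(-ta)$. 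The only genuine obstacle anywhere in this argument is the pointwise bound, and that rests on the two elementary facts $1+s \leq e^s$ and $e \geq 2$; everything else is bookkeeping parallel to the earlier lemmas.
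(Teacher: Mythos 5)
Your proof is correct and follows exactly the route the paper intends (the paper itself leaves this lemma unproved, only indicating the argument via Figure~\ref{FIG:chernoffbound} and the tangency remark about the slope $t/2$ at $x=a$): conditioning on $X$ to get the piecewise-linear $f_3$, the pointwise bound $f_3(x)\leq(1/2)\exp(t(x-a))$ via $1+s\leq e^s$ and $e\geq 2$, and reduction of the left tail to the right tail via $-X$. No gaps.
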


Again using Lemma~\ref{LEM:removeu}, the upper
tail bound applies to many variables without
using the smoothing.
\begin{lemma}
\label{LEM:chernoffremoveu}
  Let $a \geq 0$ and $t \geq 0$.  
  Suppose $X$ has finite moment generating
  function at $t$, and that $X$ 
  is a continuous random variable
  with a density that is decreasing over
  $[a-1/t,a+1/t]$ then 
  \[
  \prob(X \geq a) \leq (1/2)\mgf_X(t)\exp(-ta).
  \]
\end{lemma}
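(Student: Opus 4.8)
The plan is to reduce the claim to the smoothed Chernoff bound proved just above by showing that, under the local decreasing-density hypothesis, inserting the auxiliary uniform variable can only increase the relevant tail probability. Concretely, let $U \sim \unifdist([-1/t,1/t])$ be independent of $X$. Finiteness of $\mgf_X(t)$ is exactly what licenses the preceding smoothed Chernoff lemma, which gives $\prob(X+U \geq a) \leq (1/2)\mgf_X(t)\exp(-ta)$. So it suffices to prove $\prob(X \geq a) \leq \prob(X+U \geq a)$, which is a \emph{local} version of Lemma~\ref{LEM:removeu}.

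First I would re-derive the identity appearing in the proof of Lemma~\ref{LEM:removeu}, noting that that derivation uses only the independence of $U$ and $X$ — neither the nonnegativity of $X$ nor global monotonicity of its density is needed, and $c$ need not be $\leq a$. Taking $c = 1/t$ and integrating over $u$,
\[
\prob(X+U \geq a) = \prob(X \geq a) + \frac{1}{2c}\int_0^c \big[\prob(X \in [a-u,a]) - \prob(X \in [a,a+u])\big]\,du.
\]

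Next I would bound the integrand. For every $u \in [0,1/t]$, both intervals $[a-u,a]$ and $[a,a+u]$ lie inside $[a-1/t,a+1/t]$, where by hypothesis $X$ has a decreasing density $f_X$. Substituting $x = a-s$ and $x = a+s$,
\[
\prob(X \in [a-u,a]) - \prob(X \in [a,a+u]) = \int_0^u \big( f_X(a-s) - f_X(a+s) \big)\,ds \geq 0,
\]
since $a-s \leq a+s$ with both in the monotonicity interval. Hence the integral in the identity is nonnegative, so $\prob(X \geq a) \leq \prob(X+U \geq a)$, and combining this with the smoothed Chernoff bound finishes the proof.

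I do not expect a serious obstacle; the only points requiring care are bookkeeping ones. The main subtlety is recognizing that the identity from Lemma~\ref{LEM:removeu}'s proof goes through under the weaker local hypothesis and with no sign constraint on $X$, so that the argument is valid even when $1/t > a$ and the interval $[a-1/t,a+1/t]$ extends into the negatives. One also wants the pointwise comparison $f_X(a-s) \geq f_X(a+s)$ to make sense, which is precisely why the monotonicity window is taken symmetric about $a$ of half-width $1/t$. (The left-tail statement, if desired, would follow by applying the same reasoning to $-X$.)
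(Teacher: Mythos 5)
Your proposal is correct and follows the same route the paper takes: the paper's entire justification is the remark that the result follows ``again using Lemma~\ref{LEM:removeu}'' together with the smoothed Chernoff bound, which is exactly your reduction. If anything, you are more careful than the paper, since you explicitly verify that the identity from Lemma~\ref{LEM:removeu}'s proof survives when the monotonicity hypothesis is only local to $[a-1/t,a+1/t]$ and when that interval may extend below $0$ --- a point the paper silently elides.
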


Again consider a standard random variable
$Z$.  For $a = 1$, $t = 1$, the density
of $Z$ is decreasing over $[0,2]$, so
the original Chernoff bound of 
$\prob(Z \geq 1) \leq \exp(-1/2)$ can
be reduced using Lemma~\ref{LEM:chernoffremoveu} to
$\prob(Z \geq 1) \leq (1/2)\exp(-1/2) \approx 0.3012$ which is much closer to the exact
tail probability of about 0.1586.

\end{document}